\documentclass[12pt, reqno]{amsart}
\usepackage{amssymb, amsmath, amsthm, cancel, hyperref, color, enumerate, verbatim, mathtools}

\usepackage[margin=1in, includeheadfoot]{geometry}

\newtheorem{thm}{Theorem}
\newtheorem{rem}{Remark}
\newtheorem{lem}{Lemma}
\newtheorem{cor}{Corollary}
\newtheorem{prop}{Proposition}

\newcommand{\lcm}{\mathrm{lcm}}
\newcommand{\bul}{\raisebox{0.25ex}{\tiny$\bullet$}}
\newcommand{\leg}[2]{\genfrac{(}{)}{}{}{#1}{#2}} 

\makeatletter 
\let\@@pmod\pmod
\DeclareRobustCommand{\pmod}{\@ifstar\@pmods\@@pmod}
\def\@pmods#1{\mkern4mu({\operator@font mod}\mkern 6mu#1)}
\makeatother
\newcommand{\cphi}[1]{c\phi_{#1}} 
\newcommand{\cphibar}[1]{\overline{c\phi_{#1}}} 

\begin{document}

\title{Congruences for modular forms and generalized Frobenius partitions}
\author{Marie Jameson \and Maggie Wieczorek}

\maketitle
\section*{Abstract}
The partition function is known to exhibit beautiful congruences that are often proved using the theory of modular forms. In this paper, we study the extent to which these congruence results apply to the generalized Frobenius partitions defined by Andrews \cite{Andrews84}.  In particular, we prove that there are infinitely many congruences for $c\phi_k(n)$ modulo $\ell,$ where $\gcd(\ell,6k)=1,$ and we also prove results on the parity of $c\phi_k(n).$  Along the way, we prove results regarding the parity of coefficients of weakly holomorphic modular forms which generalize work of Ono \cite{Ono962}.


\section{Introduction and statement of results}

The partition function $p(n)$ is a beautiful and well-known function, particularly because it enjoys such surprising and delightful congruence properties. In this article, we discuss the extent to which these congruence properties carry over to the generalized Frobenius partitions defined by Andrews \cite{Andrews84}, whose counting function $c\phi_k(n)$ is a generalization of $p(n).$

In order to begin our discussion, we briefly review four of the most well-known properties for the partition function.  First, we have the Ramanujan congruences, which assert that for every nonnegative integer $n$, we have
\begin{align*}
p(5n+4) &\equiv 0\pmod{5}\\
p(7n+5) &\equiv 0\pmod{7}\\
p(11n+6) &\equiv 0\pmod{11}.
\end{align*}
Second, we know (by work of Ahlgren and Boylan \cite{AhlgrenBoylan03}) that the Ramanujan congruences are the only ``simple'' congruences, i.e., congruences of the form
\[p(\ell n+\beta) \equiv 0\pmod{\ell},\]
where $\ell$ is prime and $0\leq \beta< \ell.$ Although this may seem to imply that congruences for $p(n)$ are quite rare, the third well-known property of $p(n)$ is that they occur in great abundance, as long as we relax our restrictions on the shape of the arithmetic progression: for any prime $\ell$ coprime to 6 and positive integer $m$, Ahlgren and Ono \cite{AhlgrenOno01} proved that there exist infinitely many non-nested arithmetic progressions $\{An+B\}$ such that for every positive integer $n$ we have
\[p(An+B)\equiv 0\pmod{\ell^m}.\]
However, the behavior of $p(n)$ modulo 2 and 3 is quite different, essentially because 2 and 3 are the primes which divide the level of the associated modular form.  This leads us to the fourth well-known fact about the partition function, known as Subbarao's Conjecture, which is now a theorem due to Radu \cite{Radu12}: for any arithmetic progression $r\pmod*{t},$ there are infinitely many integers $M\equiv r\pmod*{t}$ for which $p(M)$ is odd, and there are infinitely many integers $N\equiv r\pmod*{t}$ for which $p(N)$ is even.

Now, we turn to the case of generalized Frobenius partitions, which were defined combinatorially by Andrews \cite{Andrews84}. We omit their definition here, and instead are content with the knowledge that their counting function $c\phi_k(n)$ has a generating function that is essentially a modular form (given explicitly in Section \ref{section2}).  When $k=1,$ we obtain the generating function
\[\sum_{n=0}^\infty c\phi_1(n)q^n = \prod_{n=1}^\infty\frac{1}{(1-q^n)} = \sum_{n=0}^\infty p(n)q^n,\]
i.e., $c\phi_1(n)=p(n),$ and thus we may view $c\phi_k(n)$ as a generalization of the partition function. For $k=2$ we have
\[\sum_{n=0}^\infty c\phi_2(n)q^n = \prod_{n=1}^\infty\frac{(1-q^{4n-2})}{(1-q^{2n-1})^4(1-q^{4n})}.\]

In his 1984 work, Andrews found several Ramanujan congruences for $c\phi_k(n)$ where $k>1.$  For example, he proved that when $k$ is prime and $k \nmid n,$ we have that
\[c\phi_k(n)\equiv 0\pmod{k^2},\]
giving at least one Ramanujan congruence for each prime $k$.  When $k=2,$ we also have an additional congruence, giving two Ramanujan congruences \cite{Andrews84}
\begin{align*}
c\phi_2(2n+1) &\equiv 0\pmod{2}\\ 
c\phi_2(5n+3) &\equiv 0 \pmod{5}.
\end{align*}
In this context, we also have that there are only finitely many Ramanujan congruences: Dewar \cite{Dewar11} proved that the congruences above are the only ones for $c\phi_2(n).$ While Dewar's theorem does not apply for all $k$, he notes that his approach of using Tate cycles should apply in more generality.

There are also many other congruences for generalized Frobenius partitions which can be found in the literature.  In fact, there are infinitely many such congruences. This follows by applying a theorem of Treneer \cite{Treneer06}, which gives congruences for all weakly holomorphic modular forms.

\begin{thm}\label{mainthmtreneer}
Let $m$ and $k$ be positive integers and let $\ell$ be prime with $(\ell,6k)=1.$ Then there exist infinitely many non-nested arithmetic progressions $\{An+B\}$ such that for every positive integer $n$ we have
\[c\phi_k(An+B)\equiv 0\pmod{\ell^m}.\]
\end{thm}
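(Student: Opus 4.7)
The plan is to realize $\sum_{n\geq 0}\cphi{k}(n)\,q^n$, up to a harmless correction factor, as a weakly holomorphic modular form on a congruence subgroup whose level is coprime to~$\ell$, and then to invoke the theorem of Treneer~\cite{Treneer06}.

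First I would take the explicit modular description of $\sum_n\cphi{k}(n)q^n$ recorded in Section~\ref{section2}: by Andrews, this series is essentially $\eta(z)^{-k}$ times a theta series attached to a positive-definite quadratic form of rank $k-1$. After multiplying by a suitable eta-quotient and a rational power of $q$, I obtain a weakly holomorphic modular form $F(z)=\sum b(n)q^n$ of integral or half-integral weight on $\Gamma_0(N)$ with some Nebentypus $\chi$, where $N$ divides a power of $6k$. The coefficient $b(n)$ equals $\cphi{k}(n')$ for a corresponding index $n'$ (after a shift), up to a factor that is a unit modulo~$\ell$. Since $\gcd(\ell,6k)=1$, we automatically have $\gcd(\ell,N)=1$, which is the key hypothesis required downstream.

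Next I would apply Treneer's theorem, the weakly holomorphic analogue of the Ahlgren--Ono result quoted in the introduction. That theorem produces, for any positive integer $m$, infinitely many primes $Q$ lying in certain arithmetic progressions modulo $N\ell^m$ such that $b(Q^r n)\equiv 0\pmod{\ell^m}$ for all $n$ in a suitable residue class coprime to $Q\ell$ (for an appropriate fixed $r$ depending on the weight of $F$ and the order of its pole at the cusps). Undoing the change of variables from step one, each such prime $Q$ yields an arithmetic progression $\{An+B\}$ on which $\cphi{k}(An+B)\equiv 0\pmod{\ell^m}$, and by choosing distinct primes $Q$ with distinct residues one obtains infinitely many non-nested such progressions.

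The main obstacle is the bookkeeping in step one: one must identify an eta-quotient modification of $\sum\cphi{k}(n)q^n$ that is genuinely a weakly holomorphic modular form satisfying Treneer's hypotheses (integral weight or half-integral with appropriate character, level coprime to~$\ell$, and $\ell$-integral Fourier coefficients), and then track the precise relationship between its Fourier coefficients and the values $\cphi{k}(n)$ so as to read off the arithmetic progressions in the conclusion. Once this packaging is in place, the theorem follows by a direct quotation of~\cite{Treneer06}.
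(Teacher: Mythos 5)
Your proposal is correct and follows essentially the same route as the paper: the authors form $f(z)=\eta(24z)^{-k}\sum q^{24Q(m_1,\dots,m_{k-1})}\in M^!_{-1/2}(\Gamma_0(576k),\chi_k\chi_{12})$, whose coefficients are $c\phi_k\bigl(\tfrac{n+k}{24}\bigr)$, and then quote Treneer's Theorem 1.1 to get $c\phi_k\bigl(\tfrac{P^3\ell^m n+k}{24}\bigr)\equiv 0\pmod{\ell^j}$ before unwinding the progressions exactly as you describe. The bookkeeping you flag as the main obstacle is handled in the paper by Lemma \ref{thelemma} (Miyake's theta-function transformation laws) together with the standard fact that $\eta(24z)\in S_{1/2}(\Gamma_0(576),\chi_{12})$.
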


\begin{rem}
By combining this theorem with the Chinese Remainder Theorem, we have that for any integer $M$ coprime to $6k,$ there are infinitely many congruences of the form $c\phi_k(An+B)\equiv 0\pmod*{M}.$
\end{rem}

Finally, it remains to consider whether there are congruences for $c\phi_k(n)$ modulo $\ell$, where $\ell \mid 6k.$ A natural first step is to consider $c\phi_2(n)\pmod*{2}.$ In fact, this turns out to be a particularly interesting case, since we already know that Ramanujan congruences exist modulo 2, but Theorem \ref{mainthmtreneer} does not guarantee that there are infinitely many (non-Ramanujan) congruences. However, using the fact that $(1-x^2)^2 \equiv (1-x)^4 \pmod*{4}$, we have that

\begin{align}
\sum_{n=0}^\infty c\phi_2(n)q^n &= \prod_{n=1}^\infty\frac{(1-q^{4n-2})}{(1-q^{2n-1})^4(1-q^{4n})} = \prod_{n=1}^\infty \frac{(1-q^{2n})^5}{(1-q^n)^4(1-q^{4n})^2} \nonumber\\
&\equiv \prod_{n=1}^\infty \frac{1}{(1-q^{2n})} = \sum_{n=0}^\infty p(n/2)q^n\pmod{4}. \label{mod4cong}
\end{align}
In other words, the parity of $c\phi_2(n)$ is completely dictated by the parity of the partition function, and so we define \[\overline{c\phi_2}(n) \coloneqq c\phi_2(n) - p(n/2)\] and set out to now study the parity of $\frac{\overline{c\phi_2}(n)}{4}$. This function was previously studied by Kolitsch and others \cite{Kolitsch89,Sellers94}, and prior results show that the analogue of Subbarao's conjecture is not true. However, we can prove some results in that direction, in the style of Ono \cite{Ono962}.

\begin{thm} \label{mainthm1}
For any arithmetic progression $r \pmod{t},$ there are infinitely many integers $N\equiv r\pmod{t}$ for which $\frac{\overline{c\phi_2}(N)}{4}$ is even.
\end{thm}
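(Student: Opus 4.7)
The strategy is to realize $\sum_{n}\tfrac{\overline{c\phi_2}(n)}{4}q^n \pmod 2$ as the Fourier expansion of a weakly holomorphic modular form, and then apply the parity theorem for such forms promised in the abstract as a generalization of \cite{Ono962}.

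I would begin by refining \eqref{mod4cong} to extract the $4$-divisible correction. Factoring out $\prod(1-q^{2n})^{-1}$ reduces the problem to understanding the quotient
\[
Q(q) := \prod_{n\geq 1}\frac{(1-q^{2n})^6}{(1-q^n)^4(1-q^{4n})^2} = \prod_{n\geq 1}\frac{(1+q^n)^4}{(1+q^{2n})^2}
\]
modulo $8$. Writing each local factor as $1 + 4 a_n(q)$ via the identity $(1+q^n)^4/(1+q^{2n})^2 = \bigl(1 + 2q^n/(1+q^{2n})\bigr)^2$ makes cross products vanish mod $8$, so a short divisor-sum computation should reduce $(Q(q)-1)/4 \pmod 2$ to the theta series $\sum_{k\geq 1,\, k\text{ odd}} q^{k^2}$. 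Multiplying by $\prod(1-q^{2n})^{-1}$ then realizes $\sum_n \tfrac{\overline{c\phi_2}(n)}{4}q^n \pmod 2$ as the product of an eta quotient and a theta series supported on odd squares.

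A crucial observation follows: $\prod(1-q^{2n})^{-1}$ is supported on even powers of $q$ while $\sum_{k\text{ odd}} q^{k^2}$ is supported on odd powers, so their product is supported on odd $N$ only. Hence $\overline{c\phi_2}(N)/4$ is automatically even whenever $N$ is even, which settles Theorem \ref{mainthm1} for every arithmetic progression $r\pmod{t}$ containing infinitely many even integers. The only nontrivial case is therefore $t$ even and $r$ odd.

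In that case, multiplying the right-hand side by an appropriate power of $\eta$ (to clear denominators and achieve an admissible weight on some $\Gamma_0(N)$) produces a weakly holomorphic modular form $F$ with integer coefficients whose mod-$2$ reduction encodes $\overline{c\phi_2}(N)/4$. Applying the paper's parity theorem then yields infinitely many Fourier coefficients of $F$ that are even in any prescribed arithmetic progression, which translate through the $\eta$-shift back to infinitely many $N\equiv r\pmod{t}$ with $\overline{c\phi_2}(N)/4$ even. The main obstacle is the divisor-sum simplification collapsing $(Q-1)/4 \pmod 2$ to the theta series, together with verifying that the resulting form is nonzero mod $2$ so that the parity theorem applies nontrivially.
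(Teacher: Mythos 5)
Your proposal is correct and its endgame coincides with the paper's: both realize $\sum_n \overline{c\phi_2}(n)/4\,q^n \pmod 2$ as (the reduction of) a weakly holomorphic modular form and then invoke the general parity result, Theorem \ref{mainthm1gen}. The routes to that point differ. The paper simply quotes Sellers' exact identity $\sum_n\overline{c\phi_2}(n)q^n = 4q\prod(1-q^{16n})^2(1-q^n)^{-2}(1-q^{8n})^{-1}$ and rewrites it as the eta-quotient $\eta(192z)^2/(\eta(12z)^2\eta(96z))$ in the variable $q^{12}$, whereas you re-derive the mod-2 reduction from scratch via the mod-$8$ expansion of the local factors $(1+q^n)^4/(1+q^{2n})^2=\bigl(1+2q^n/(1+q^{2n})\bigr)^2$; your divisor-sum computation does land on $\prod(1-q^{2n})^{-1}\sum_{k\ \text{odd}}q^{k^2}$, which is exactly Sellers' product reduced mod $2$ (since $\sum_{k\geq 1\ \text{odd}}q^{k^2}=q\prod(1-q^{16n})^2/(1-q^{8n})$), so this is a legitimate self-contained alternative. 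Two remarks. First, your even/odd case split is unnecessary: Theorem \ref{mainthm1gen} carries no nonvanishing hypothesis (if the form vanished mod $2$ the conclusion would be trivially true), so the whole form can be fed to it for every progression $r\pmod{t}$ at once; the oddness of the support is just the known congruence $\overline{c\phi_2}(2n)\equiv 0\pmod 8$, and likewise ``verifying that the resulting form is nonzero mod $2$'' is not needed for the evenness statement. Second, ``multiplying by an appropriate power of $\eta$'' must be done with care: a bare power of $\eta$ scrambles the coefficients mod $2$, so what is actually required is multiplication by an eta-quotient congruent to $1\pmod 2$ --- the paper uses $\eta(z)^2/\eta(2z)$ --- which promotes the weight from $-\tfrac12$ to $0$ on $\Gamma_0(576)$ without altering the mod-$2$ Fourier expansion. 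With that standard fix your plan goes through.
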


\begin{thm} \label{mainthm2}
For any arithmetic progression $r\pmod{t},$ there are infinitely many integers $M\equiv r\pmod{t}$ for which $\frac{\overline{c\phi_2}(M)}{4}$ is odd, provided there is one such $M$. Furthermore, if there does exist an $M\equiv r\pmod{t}$ for which $\frac{\overline{c\phi_2}(M)}{4}$ is odd, then the smallest such $M$ is less than $C_{r,t}$ where
\[C_{r,t}\coloneqq \frac{2^{18+j}\cdot 3^7t^6}{d^2}\prod_{p\mid 6t} \left(1 - \frac{1}{p^2}\right)-2^j,\]
$d\coloneqq \gcd(12r-1,t),$ and $j$ is an integer satisfying $2^j>\frac{t}{12}.$
\end{thm}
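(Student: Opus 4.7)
The approach adapts Ono's method in \cite{Ono962}, with $\overline{c\phi_2}/4$ in place of $p(n)$, invoking the general parity result for weakly holomorphic modular forms proved earlier in this paper. The first step is to realize
\[H(z) \coloneqq \frac{1}{4}\!\left(\frac{\eta(24z)^5}{\eta(12z)^4\,\eta(48z)^2} - \frac{1}{\eta(24z)}\right) = \sum_{n\ge 0}\frac{\overline{c\phi_2}(n)}{4}\,q^{12n-1}\]
as a weakly holomorphic modular form of weight $-1/2$ on a congruence subgroup of level dividing $576$; the identity follows from (\ref{mod4cong}) together with $\eta(kz) = q^{k/24}\prod_{n\ge 1}(1-q^{kn})$. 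The form $H$ is holomorphic at the cusps other than $\infty$, and its mod-$2$ Fourier coefficients are exactly the parities we wish to control.

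The second step is to multiply $H$ by $\eta(24z)\,\Delta(24z)^{2^j}$ (with $j$ as in the theorem) to obtain a holomorphic form $F$ of integer weight $k=12\cdot 2^j$ on $\Gamma_0(576)$. Because $\Delta(24z)=\eta(24z)^{24}$ and $(1-x)^{2^j}\equiv 1-x^{2^j}\pmod 2$, the factor $\Delta(24z)^{2^j}$ reduces mod~$2$ to a sparse series, so the mod-$2$ Fourier coefficients of $F$ recover those of $H$ on an explicit shifted set of exponents; the hypothesis $2^j>t/12$ controls this shift relative to the modulus $t$. Next apply the character-sum projection operator $\Pi_{r,t}$ extracting the Fourier coefficients of $F$ in the progression $m\equiv 12r-1\pmod{12t}$. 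After using $d=\gcd(12r-1,t)$ to reduce the effective modulus to $12t/d$, the image $\Pi_{r,t}F$ is a weakly holomorphic modular form of weight~$k$ on an explicit congruence subgroup whose $\mathrm{SL}_2(\mathbb{Z})$-index contributes the factor $\prod_{p\mid 6t}(1-1/p^2)$ characteristic of $\Gamma_1$- or $\Gamma$-type subgroups.

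Finally, invoke the general parity theorem from earlier in the paper. If $\Pi_{r,t}F\equiv 0\pmod 2$, then no $M\equiv r\pmod t$ gives $\overline{c\phi_2}(M)/4$ odd, and the theorem is vacuous. Otherwise, the parity theorem yields infinitely many odd Fourier coefficients of $\Pi_{r,t}F$, with the smallest occurring at index bounded by the effective Sturm bound for $\Pi_{r,t}F$; translating this back to an index $M$ on $\overline{c\phi_2}/4$ via $m=12M-1+\text{shift}$ produces the stated constant $C_{r,t}$, the subtracted $2^j$ arising from the exponent shift introduced by $\eta(24z)\Delta(24z)^{2^j}$.

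\textbf{Main obstacle.} The principal difficulty is the exact bookkeeping needed to obtain $C_{r,t}$ rather than a looser $O_{r,t}(t^6)$ bound. One must pin down precisely the level of $\Pi_{r,t}F$ (yielding the product $\prod_{p\mid 6t}(1-1/p^2)$ and the $t^6/d^2$ factor), the weight $k=12\cdot 2^j$ (contributing $2^{18+j}\cdot 3^7$ together with the numerical factors from $576$ and the index formula for the relevant congruence subgroup), and the exponent shift accompanying multiplication by $\eta(24z)\Delta(24z)^{2^j}$. The infinitude portion of the conclusion is then routine: a weakly holomorphic modular form of integer weight with only finitely many odd Fourier coefficients would, after further multiplication by a suitable cusp form to make it a true cusp form, violate the Sturm bound.
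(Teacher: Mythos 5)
Your identity $H(z)=\frac14\bigl(\frac{\eta(24z)^5}{\eta(12z)^4\eta(48z)^2}-\frac{1}{\eta(24z)}\bigr)=\sum \frac{\overline{c\phi_2}(n)}{4}q^{12n-1}$ is correct, and the overall plan (pass to integer weight, project onto the progression $12r-1\pmod*{12t}$, and combine a Sturm bound with the parity machinery) matches the paper's strategy, which simply applies its general Theorems \ref{mainthm1gen} and \ref{mainthm2gen} to an eta-quotient representation of this series. But there is a genuine gap in your step from half-integral to integral weight. You multiply $H$ by $\eta(24z)\Delta(24z)^{2^j}$ and assert that only $\Delta(24z)^{2^j}$ contributes a sparse factor mod $2$, so that the coefficients of $F$ ``recover those of $H$ on an explicit shifted set of exponents.'' That is false: $\eta(24z)=\sum_k(-1)^kq^{(6k+1)^2}\equiv\sum_k q^{(6k+1)^2}\pmod 2$ is itself an infinite theta series, not a unit and not a monomial. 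Hence $F\equiv H\cdot\bigl(\sum_k q^{(6k+1)^2}\bigr)\bigl(\sum_n q^{24\cdot 2^j(2n+1)^2}\bigr)\pmod 2$ is a \emph{triple} convolution. This destroys exactly the structure the method needs: Lemma \ref{lem1} (Ono's Lemma 1) applies only to expansions of the shape $\sum_i\sum_n a_i(n)q^{w(2n+1)^2+b_i}$ with a single quadratic progression and finitely many shifts $b_i$, and the ``first odd exponent'' computation feeding the Sturm bound likewise requires a single lacunary factor. A product of two theta series cannot be rewritten in that form, so neither the infinitude argument nor the explicit constant $C_{r,t}$ follows from your setup as written.

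The paper's fix is the device you are missing: instead of $\eta(24z)$, multiply by a weight-$\tfrac12$ eta-quotient that is congruent to $1$ modulo $2$, namely $\frac{\eta(z)^2}{\eta(2z)}=\prod\frac{(1-q^n)^2}{1-q^{2n}}\equiv 1\pmod 2$. Concretely, the paper writes $\sum\frac{\overline{c\phi_2}(n)}{4}q^{12n-1}=\frac{\eta(192z)^2}{\eta(12z)^2\eta(96z)}\equiv\frac{\eta(192z)^2\eta(z)^2}{\eta(12z)^2\eta(96z)\eta(2z)}\pmod 2$, obtaining a weight-$0$ form in $M_0^!(\Gamma_0(576),\leg{12}{\bul})$ with the \emph{same} mod-$2$ expansion, and then invokes Theorems \ref{mainthm1gen} and \ref{mainthm2gen} with $\alpha=12$, $\beta=-1$, $N_0=576$, $k=0$; the single extra theta factor then comes only from $\Delta^{2^j}(12tz)$, as Proposition \ref{prop1} requires. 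Your argument can be repaired by replacing $\eta(24z)$ with such a unit-mod-$2$ multiplier (and rechecking the level and the cusp conditions determining how large $j$ must be, since $H$ is in fact not holomorphic at cusps away from $\infty$); without that change the proof does not go through.
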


In fact, Theorem \ref{mainthm1} and \ref{mainthm2} (as well as their analogues for the partition function, which were proved by Ono \cite{Ono962}) are special cases of a much more general phenomenon, which applies to the coefficients of any weakly holomorphic modular form with algebraic integer coefficients and any arithmetic progression of modulus $t>1.$  We prove these general results along the way (see Theorems \ref{mainthm1gen} and \ref{mainthm2gen}).

This article is organized as follows: in Section \ref{section2}, we outline the connection between $c\phi_k(n)$ and the theory of modular forms and apply results of Treneer \cite{Treneer06} to prove Theorem \ref{mainthmtreneer}, which guarantees congruences modulo $\ell$ for $c\phi_k(n)$ provided $(\ell,6k)=1$. In Section \ref{generalresults}, we step back from partition functions to prove results on the parity of coefficients of weakly holomorphic modular forms in the style of Ono \cite{Ono962}. Lastly, in Section \ref{cors}, we see these results applied to a specific partition function, $\cphibar{2}(n)/4.$


\section{Congruences for $\cphi{k}(n)$ when $(\ell,6k)=1$} \label{section2}

\subsection{Modular forms and the generating function for $\cphi{k}(n)$} In order to prove Theorem \ref{mainthmtreneer}, we first need to study the generating function for $c\phi_k(n)$ and relate it to the theory of modular forms. Andrews found the generating function for $\cphi{k}(n)$ \cite[Theorem 5.2]{Andrews84} to be

\begin{equation} \label{coreq1} 
\sum_{n=0}^{\infty} \cphi{k}(n)q^n = \frac{\displaystyle\sum_{m_1, m_2, \ldots, m_{k-1}\in \mathbb{Z}}q^{Q(m_1, m_2, \ldots, m_{k-1})}}{\displaystyle\prod_{n=1}^{\infty}(1-q^n)^k},
\end{equation}
where $Q(m_1, m_2, \ldots, m_{k-1})$ is given by
\[Q(m_1, m_2, \ldots, m_{k-1}) \coloneqq \sum_{i=1}^{k-1}m_i^2+\sum_{1\leq i<j\leq k-1}m_im_j.\]

Next, we apply the theory of theta functions to show that this generating function is essentially a modular form. Here, we use the following standard notation: for $k, N$ positive integers and $\chi$ a Dirichlet character modulo $N$, we let $M_{\frac{k}{2}}(\Gamma_0(N),\chi)$ \big[\emph{resp. $M^!_{\frac{k}{2}}({\Gamma_0(N)},\chi)$}\big] denote the space of holomorphic [\emph{resp. weakly holomorphic}] modular forms of weight $\frac{k}{2}$ and character $\chi$ for the congruence subgroup $\Gamma_0(N)$ of $\mathrm{SL}_2(\mathbb{Z})$. For more background on modular forms of integer and half integer weight, see \cite{Koblitz93, Miyake, Ono04}. Note also that a similar lemma appears in \cite{ChanWangYang2017}. 

\begin{lem} \label{thelemma}
Let $k$ be an positive integer and let $Q(m_1, m_2, \ldots, m_{k-1})$ be defined as above.
\begin{enumerate}[(a)]
\item If $k$ is odd, then we have that
\begin{equation} \label{coreq2}
\sum_{m_1, m_2, \ldots, m_{k-1}\in \mathbb{Z}}q^{Q(m_1, m_2, \ldots, m_{k-1})} \in M_{\frac{k-1}{2}}(\Gamma_0(k),\chi_k),
\end{equation}
where we set $\chi_k(\bul)\coloneqq\leg{(-1)^{(k-1)/2}k}{\bul}$ for $k$ odd.
\item If $k$ is even, then we have that
\begin{equation} \label{coreq2}
\sum_{m_1, m_2, \ldots, m_{k-1}\in \mathbb{Z}}q^{Q(m_1, m_2, \ldots, m_{k-1})} \in M^!_{\frac{k-1}{2}}({\Gamma_0(2k)},\chi_k),
\end{equation}
where we set $\chi_k(\bul)\coloneqq\leg{2k}{\bul}$ for $k$ even.
\end{enumerate}
\end{lem}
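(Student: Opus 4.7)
The plan is to recognize the theta series $\Theta(z) := \sum_{m_1,\ldots,m_{k-1}\in\mathbb{Z}} q^{Q(m_1,\ldots,m_{k-1})}$ as the theta series attached to a positive-definite integer quadratic form in $k-1$ variables and invoke the standard modularity theorem for such theta series (Shimura; see e.g.\ Theorem~1.44 of \cite{Ono04}).

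First I would identify the Gram matrix by writing $Q(x) = \tfrac{1}{2}x^T A x$, which gives $A = I_{k-1} + J_{k-1}$, the symmetric matrix with $2$'s on the diagonal and $1$'s off the diagonal. Since $J_{k-1}$ has eigenvalue $k-1$ on the all-ones vector and $0$ on its orthogonal complement, $A$ has eigenvalues $k$ (with multiplicity one) and $1$ (with multiplicity $k-2$); in particular $A$ is positive-definite, $\det A = k$, and $A^{-1} = I_{k-1} - \tfrac{1}{k}J_{k-1}$.

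Next I would determine the level of $A$, namely the smallest positive integer $N$ for which $NA^{-1}$ has integer entries with even diagonal (with the additional requirement $4\mid N$ when the weight is half-integral). The diagonal of $NA^{-1}$ is $N(k-1)/k$. When $k$ is odd, $N=k$ produces the even diagonal entry $k-1$, giving level $k$. When $k$ is even, $N=k$ produces the odd entry $k-1$, so $N$ must be a multiple of $2k$; the minimal choice $N=2k$ is automatically divisible by $4$, which is the level condition required for a half-integer weight form.

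Invoking the standard theorem then yields exactly the claimed membership: weight $(k-1)/2$, level $k$ in the odd case and $2k$ in the even case, and character the Kronecker symbol associated to $(-1)^{(k-1)/2}\det A = (-1)^{(k-1)/2}k$ in the integer weight case and to $2k$ in the half-integer weight case. The one place I expect to have to work carefully is the half-integer weight bookkeeping when $k$ is even: tracking the $\theta$-multiplier and the square-class normalization to land precisely at the character $\chi_k(\bul) = \leg{2k}{\bul}$. Positive-definiteness of $Q$ already makes $\Theta(z)$ holomorphic on the upper half-plane, so writing ``weakly holomorphic'' in part (b) is merely conservative — one does not need to verify holomorphy at every cusp for the applications (Treneer's theorem) that will follow.
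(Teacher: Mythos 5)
Your proposal is correct and follows essentially the same route as the paper: both identify the sum as the theta series of the positive-definite form with Gram matrix $\mathbf{I}_{k-1}+\mathbf{1}_{k-1}$ (determinant $k$, level $k$ or $2k$ according to the parity of $k$) and then invoke a standard modularity theorem for such theta series, the paper citing Section 4.9 of Miyake where you cite Shimura/Ono. Your explicit level and determinant computations match what the cited results of Miyake deliver, and your flagged concern about the half-integer-weight multiplier in part (b) is exactly the point the paper handles by quoting Miyake's transformation formula verbatim.
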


\begin{proof}
Set $r\coloneqq k-1,$ $A \coloneqq \mathbf{I}_{k-1} + \mathbf{1}_{k-1}$ (i.e., the sum of the identity matrix and the all-ones matrix), $h\coloneqq 0, N\coloneqq k,$ $P(x)\coloneqq 1,$ and $v \coloneqq 0.$ Following the notation of Section 4.9 of \cite{Miyake}, we have that \[\theta(z; h, A, N, P) = \sum_{m_1, m_2, \ldots, m_{k-1}\in \mathbb{Z}}q^{Q(m_1, m_2, \ldots, m_{k-1})}.\] Part (a) follows immediately from part (3) of Corollary 4.9.5 of \cite{Miyake}. Part (b) follows from Theorem 4.9.3 (and the remark which follows it) of \cite{Miyake}, since for $\gamma = \begin{pmatrix}a&b\\c&d\end{pmatrix}\in \Gamma_0(2k),$
\[\theta(\gamma z; h, a, N, P) = \leg{2k}{d}\leg{c}{d}^r\varepsilon_d^{-r}(cz+d)^{r/2}\theta(z; h,A,N,P),\]
where as usual $\varepsilon_d \coloneqq \begin{cases}1 & d\equiv 1\pmod*{4}\\i & d\equiv 3\pmod*{4} \end{cases}.$
\end{proof}

\subsection{Treneer's theorems and the proof of Theorem \ref{mainthmtreneer}} Powerful results of Treneer from 2006 \cite{Treneer06} establish congruences for coefficients of certain types of modular forms.

\begin{thm}[Theorem 1.1 of \cite{Treneer06}] \label{treneerthm1.1}
Suppose that $\ell$ is an odd prime, and that $k$ and $m$ are integers with $k$ odd. Let $N$ be a positive integer with $4\mid N$ and $(N,\ell)=1,$ and let $\chi$ be a Dirichlet character modulo $N$. Let $K$ be an algebraic number field with ring of integers $\mathcal{O}_K,$ and suppose $f(z)=\sum a(n)q^n\in M^!_{\frac{k}{2}}({\Gamma_0(N)},\chi)\cap\mathcal{O}_K((q))$. If $m$ is sufficiently large, then for each positive integer $j$, a positive proportion of the primes $P\equiv-1\pmod*{N\ell^j}$ have the property that
\[a(P^3\ell^mn)\equiv0\pmod*{\ell^j}\]
for all $n$ coprime to $P\ell$.
\end{thm}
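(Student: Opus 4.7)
The plan is to follow the Serre--Ono congruence machinery, as extended by Treneer to the weakly holomorphic setting. First, to isolate the coefficients $a(\ell^m n)$ with $\gcd(n,\ell)=1$, I would introduce the sieve operator $\Phi_m(z) \coloneqq (f \mid U_{\ell^m})(z) - ((f \mid U_{\ell^{m+1}}) \mid V_\ell)(z)$, whose Fourier expansion at infinity is $\sum_{\gcd(n,\ell)=1} a(\ell^m n) q^n$. Standard facts about $U_\ell$ and $V_\ell$ show that $\Phi_m$ is still a weakly holomorphic form of weight $k/2$ on a congruence subgroup of the shape $\Gamma_0(N\ell^s)$ with a mildly twisted character.

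The technical heart of the proof, and the main obstacle, is to show that for $m$ sufficiently large, $\Phi_m$ is congruent modulo $\ell^j$ to a holomorphic \emph{cusp form} $g$ on $\Gamma_0(N\ell^s)$. Following the pattern of Ono's work in the holomorphic case, I would multiply $f$ by an auxiliary eta-type form such as $E(z) \coloneqq (\eta(\ell z)/\eta(z))^{c\ell^j}$ for a suitably chosen exponent $c$; here $E \equiv 1 \pmod{\ell^j}$ in its $q$-expansion at $\infty$, yet vanishes to high order at every cusp of $\Gamma_0(N\ell)$ inequivalent to $\infty$. Taking $c$ large enough to absorb all poles of $f$ away from $\infty$ produces a holomorphic form $Ef$ whose expansion at $\infty$ still matches $f$ modulo $\ell^j$. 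Applying $U_{\ell^m}$ for large $m$ and carefully tracking the resulting orders of vanishing at every cusp then yields a genuine holomorphic cusp form $g$ with $g \equiv \Phi_m \pmod{\ell^j}$. Making this cusp-by-cusp bookkeeping work uniformly is precisely where Treneer's argument extends the earlier holomorphic theory.

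Next, having produced $g \in S_{k/2}(\Gamma_0(N\ell^s), \chi') \cap \mathcal{O}_K[[q]]$, I would invoke a Deligne--Serre style finiteness result: the image of the Hecke algebra acting on the finite $\mathcal{O}_K/\ell^j$-module $S_{k/2}(\Gamma_0(N\ell^s), \chi') \otimes \mathcal{O}_K/\ell^j$ is finite, so by Chebotarev a positive proportion of primes $P$ in any admissible arithmetic progression reduce to any prescribed Hecke element. The congruence condition $P \equiv -1 \pmod{N\ell^j}$ is chosen precisely so that on every mod $\ell^j$ eigensystem occurring in $g$, the Hecke eigenvalue of $T_{P^2}$ reduces to $0$: for such $P$ one has $P^{k-2} \equiv 1 \pmod{\ell^j}$, and the character together with the Kronecker symbol $\leg{(-1)^{(k-1)/2}}{P}$ simplifies so that ``$T_{P^2} g \equiv 0 \pmod{\ell^j}$'' becomes a Frobenius condition of positive Chebotarev density.

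Finally, I would read off the conclusion from the explicit half-integer weight Hecke formula
\[(g \mid T_{P^2})(M) = b(P^2 M) + \chi'(P)\leg{(-1)^{(k-1)/2} M}{P} P^{(k-3)/2} b(M) + \chi'(P)^2 P^{k-2} b(M/P^2)\]
evaluated at $M = P\ell^m n$ with $\gcd(n,P\ell)=1$. Since $P \mid M$ the Kronecker symbol in the middle term vanishes, and since $P^2 \nmid M$ the third term vanishes, so the relation $g \mid T_{P^2} \equiv 0 \pmod{\ell^j}$ collapses to $b(P^3 \ell^m n) \equiv 0 \pmod{\ell^j}$. Combining this with the congruence $g \equiv \Phi_m \pmod{\ell^j}$, and noting that the $P^3\ell^m n$-th coefficient of $\Phi_m$ is $a(\ell^m \cdot P^3 n)$ whenever $\gcd(P^3 n, \ell)=1$, gives $a(P^3 \ell^m n) \equiv 0 \pmod{\ell^j}$ for all $n$ coprime to $P\ell$, as required.
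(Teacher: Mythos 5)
This statement is not proved in the paper at all: it is Theorem 1.1 of Treneer \cite{Treneer06}, quoted verbatim and used as a black box (the paper's own contribution here is only Corollary \ref{treneercor}, which deduces Theorem \ref{mainthmtreneer} by applying this result to the eta-quotient in equation \eqref{coreq3}). So there is no in-paper proof to compare against; your sketch has to be measured against Treneer's original argument, and as an outline it matches her strategy faithfully: the sieve $f\mid U_{\ell^m} - (f\mid U_{\ell^{m+1}})\mid V_\ell$, multiplication by an eta-quotient that is $\equiv 1 \pmod{\ell^j}$ at infinity but vanishes at the troublesome cusps, iteration of $U_\ell$ to control the remaining cusps and land in a space of cusp forms modulo $\ell^j$, the Serre/Shimura-correspondence density statement forcing $T_{P^2}\equiv 0 \pmod{\ell^j}$ for a positive proportion of $P\equiv -1\pmod*{N\ell^j}$, and the half-integral-weight Hecke recursion to extract $a(P^3\ell^m n)\equiv 0$.

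Two details in your write-up need repair. First, the auxiliary form $(\eta(\ell z)/\eta(z))^{c\ell^j}$ is not $\equiv 1$ modulo anything, since its $q$-expansion begins with $q^{c\ell^j(\ell-1)/24}$; the correct choice is a power of $\eta(z)^{\ell}/\eta(\ell z)$ (Treneer uses $\eta(z)^{\ell^2}/\eta(\ell^2 z)$ raised to the power $\ell^{j-1}$), which satisfies the congruence because $(1-q^n)^{\ell}\equiv 1-q^{\ell n}\pmod*{\ell}$, and which vanishes only at the cusps $a/c$ with $\ell^2\nmid c$ --- not at every cusp inequivalent to $\infty$ --- which is exactly why the $U_{\ell^m}$ step is needed to handle the remaining cusps. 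Second, your final index bookkeeping is off: writing $g\equiv \Phi_m=\sum_{(n,\ell)=1}a(\ell^m n)q^n\pmod*{\ell^j}$ with coefficients $b(n)\equiv a(\ell^m n)$, the Hecke relation should be evaluated at $M=Pn$ with $\gcd(n,P\ell)=1$, giving $b(P^3 n)\equiv 0$ and hence $a(\ell^m P^3 n)\equiv 0$; as written you evaluate at $M=P\ell^m n$, where the coefficient $b(P^3\ell^m n)$ is trivially zero because $\Phi_m$ is supported on indices coprime to $\ell$, making the deduction vacuous. Both are fixable slips rather than conceptual gaps.
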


We can now prove Theorem \ref{mainthmtreneer}, which is essentially a corollary of Theorem \ref{treneerthm1.1}.

\begin{cor} \label{treneercor}
Let $m$ and $k$ be positive integers and let $\ell$ be prime with $(\ell,6k)=1.$ Then if $m$ is sufficiently large, for each positive integer $j$, a positive proportion of the primes $P\equiv-1\pmod*{576k\ell^j}$ have the property that
\[\cphi{k}\left(\frac{P^3\ell^mn+k}{24}\right)\equiv0\pmod*{\ell^j}\]
for all $n$ coprime to $P\ell.$
\end{cor}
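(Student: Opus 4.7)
The plan is to deduce the corollary from Theorem \ref{treneerthm1.1} by repackaging the generating function \eqref{coreq1} as a weakly holomorphic modular form of half-integer weight. Let $\Theta(z) \coloneqq \sum_{m_1,\ldots,m_{k-1} \in \mathbb{Z}} q^{Q(m_1,\ldots,m_{k-1})}$ denote the theta function of Lemma \ref{thelemma}, and let $\eta(z) = q^{1/24}\prod_{n\ge 1}(1-q^n)$ be the Dedekind eta function. Substituting $q \mapsto q^{24}$ in \eqref{coreq1} and multiplying by $q^{-k}$ gives
\[F(z) \coloneqq \frac{\Theta(24z)}{\eta(24z)^k} = \sum_{n=0}^{\infty} c\phi_k(n)\, q^{24n - k}.\]

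The first step is to verify that $F \in M^!_{-1/2}(\Gamma_0(576k),\psi)$ for some Dirichlet character $\psi$ modulo $576k$, with integral $q$-expansion. Lemma \ref{thelemma} combined with the standard fact that $z \mapsto 24z$ multiplies the level by $24$ places $\Theta(24z)$ in a (weakly) holomorphic space of weight $(k-1)/2$ and level dividing $48k$ with character $\chi_k$. A standard computation shows that $\eta(24z)^{-k}$ is a weakly holomorphic form of weight $-k/2$ on $\Gamma_0(576)$ with character $\leg{12}{\bul}^k$, so the product $F$ has weight $-1/2$ and level dividing $\lcm(48k,576)$, which itself divides $576k$. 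Integrality of the $q$-expansion coefficients is immediate from the definition.

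Next I would invoke Theorem \ref{treneerthm1.1} for $F$. The weight $-1/2$ corresponds to Treneer's odd-integer parameter $-1$; the level $576k$ is divisible by $4$ and, because $(\ell,6k)=1$ and $576 = 2^6 \cdot 3^2$, coprime to $\ell$; and the coefficients lie in $\mathcal{O}_{\mathbb{Q}} = \mathbb{Z}$. The theorem then produces, for $m$ sufficiently large and every $j \ge 1$, a positive proportion of primes $P \equiv -1 \pmod{576k\ell^j}$ such that the $q^{P^3\ell^m n}$-coefficient of $F$ is divisible by $\ell^j$ whenever $\gcd(n,P\ell) = 1$. By construction, that coefficient equals $c\phi_k((P^3 \ell^m n + k)/24)$ when $P^3 \ell^m n \equiv -k \pmod{24}$ and vanishes otherwise, so (extending $c\phi_k$ by $0$ at non-integer arguments) this is precisely the claimed congruence.

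The main obstacle I anticipate is the modularity bookkeeping in the first step, in particular pinning down the character $\psi$ and uniformly handling the $k$ even and $k$ odd cases of Lemma \ref{thelemma} within the single level $576k$; after $F$ has been placed in the correct ambient space, the remainder of the argument is a direct invocation of Theorem \ref{treneerthm1.1} followed by a routine reindexing.
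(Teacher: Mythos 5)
Your proposal is correct and follows essentially the same route as the paper: both form the eta-quotient $\Theta(24z)/\eta(24z)^k \in M^!_{-1/2}(\Gamma_0(576k),\chi_k\chi_{12})$ (the paper uses $\eta(24z)\in S_{1/2}(\Gamma_0(576),\chi_{12})$ together with Lemma \ref{thelemma} to do the bookkeeping you flag as the main obstacle) and then apply Theorem \ref{treneerthm1.1} with odd weight parameter $-1$, reading off the congruence from the reindexed $q$-expansion.
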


\begin{proof}Let $k$ be a positive integer and define
\begin{equation} \label{coreq3}
f(z)\coloneqq \frac{\displaystyle\sum_{m_1, m_2, \ldots, m_{k-1}\in \mathbb{Z}}q^{24Q(m_1, m_2, \ldots, m_{k-1})}}{\eta(24z)^k},
\end{equation}
where $\eta(z) \coloneqq q^{1/24}\prod_{n=1}^\infty(1-q^n)$ is the Dedekind eta-function. It is well-known that $\eta(24z)\in S_{1/2}(\Gamma_0(576),\chi_{12}),$ where $\chi_{12}(\bul)\coloneqq\leg{12}{\bul}.$ Thus by Lemma \ref{thelemma}, we have that $f(z)$ is a weakly holomorphic modular form of level $576k,$ weight $-\frac{1}{2}$ and character $\chi_k\chi_{12}$, i.e.
\[f(z)\in M^!_{-\frac{1}{2}}({\Gamma_0(576k)},\chi_k\chi_{12}).\]
From equations \eqref{coreq1} and \eqref{coreq3} it follows that 
\[f(z)=\sum_{n=-k}^{\infty}\cphi{k}\left(\frac{n+k}{24}\right)q^n.\]

Now, let $\ell\geq5$ be a prime such that $\ell\nmid k$ and $m$ be a positive integer. Theorem \ref{treneerthm1.1} states that if $m$ is sufficiently large, then for each positive integer $j$, a positive proportion of the primes $P\equiv-1\pmod*{576k\ell^j}$ have the property that
\[\cphi{k}\left(\frac{P^3\ell^mn+k}{24}\right)\equiv0\pmod*{\ell^j}\]
for all $n$ coprime to $P\ell$ as desired.
\end{proof}

Note that for each prime $P$ guaranteed by Corollary \ref{treneercor}, we may let $n$ vary in an appropriate arithmetic progression (i.e., one which guarantees that $P^3\ell^m n\equiv -k\pmod*{24}$ and $(n,P\ell)=1$) to obtain Theorem 1.


\section{Parity results for coefficients of modular forms} \label{generalresults}

In this section, we will prove the following general results regarding the parity of coefficients of weakly holomorphic modular forms, which we assume to have algebraic integer coefficients.

\begin{thm} \label{mainthm1gen}
Let $N_0, \alpha, \beta, t$ be integers with $N_0, \alpha, t$ positive, and let
\[\sum_{n=0}^\infty c(n)q^{\alpha n + \beta}\in M_{k}^!(\Gamma_0(N_0),\chi),\]
where $c(n)$ are algebraic integers in some number field. For any arithmetic progression $r \pmod{t},$ there are infinitely many integers $M\equiv r\pmod{t}$ for which $c(M)$ is even.
\end{thm}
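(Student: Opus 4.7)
The plan is to adapt Ono's argument \cite{Ono962} for the parity of $p(n)$ to this more general setting, arguing by contradiction. Suppose only finitely many $M \equiv r \pmod{t}$ satisfy $c(M) \equiv 0 \pmod{2}$; equivalently, there exists $M_0$ such that $c(M) \equiv 1 \pmod{2}$ for every $M \geq M_0$ with $M \equiv r \pmod{t}$. The first step is to isolate the arithmetic progression: averaging the translates $f(z + j/(\alpha t))$ for $0 \leq j < \alpha t$ with appropriate roots-of-unity weights produces a weakly holomorphic modular form on $\Gamma_0(\mathrm{lcm}(N_0, (\alpha t)^2))$, namely
\[g(z) \coloneqq \sum_{M \equiv r \,(\mathrm{mod}\, t)} c(M)\, q^{\alpha M + \beta} \in M_k^!(\Gamma_0(N_1), \chi'),\]
for an appropriate level $N_1$ and a character $\chi'$ built from $\chi$.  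Under the hypothesis, modulo $2$ one has
\[g(z) \equiv P(q) + \frac{q^{\alpha M_0 + \beta}}{1 - q^{\alpha t}} \pmod{2}\]
for some polynomial $P \in \mathbb{Z}[q]$.

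The second step is to convert $g$ into a holomorphic form of positive integer weight with algebraic integer coefficients.  For $a$ sufficiently large, the cusp vanishing orders of $\eta(24z)^{24 \cdot 2^a} \in S_{12\cdot 2^a}(\Gamma_0(576))$ absorb the pole orders of $g$ at every cusp, so that
\[F(z) \coloneqq g(z)\cdot \eta(24z)^{24\cdot 2^a}\]
(multiplied additionally by a theta series of weight $1/2$ if $k$ is half-integral) is a holomorphic cusp form in $S_{k'}(\Gamma_0(N_2), \chi'')$ of positive integer weight $k'$ with algebraic integer Fourier coefficients.  Moreover, the elementary congruence $(1-x)^{2^a} \equiv 1 - x^{2^a} \pmod{2}$ yields an explicit, rather sparse expansion of $\eta(24z)^{24\cdot 2^a}$ modulo $2$, so the product $F \pmod 2$ is amenable to direct analysis.

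Finally, I would invoke a theorem of Serre: for a cusp form of positive integer weight with algebraic integer coefficients that does not vanish identically modulo a prime $\mathfrak{p}$ above $2$, the Fourier coefficients lie in $\mathfrak{p}$ on a set of density $1$ (hence density $1$ in every arithmetic progression).  On the other hand, expanding
\[F \equiv \bigl(P(q) + q^{\alpha M_0 + \beta}/(1-q^{\alpha t})\bigr)\cdot \eta(24z)^{24\cdot 2^a} \pmod{2}\]
and computing the $n$th coefficient of $F$ as a count of $m' \in \mathrm{supp}(\eta(24z)^{24\cdot 2^a})$ with $m' \leq n - \alpha M_0 - \beta$ and $m' \equiv n - \alpha M_0 - \beta \pmod{\alpha t}$, one finds that for $a$ chosen appropriately this count advances by exactly $1$ as $n$ advances through a suitable sub-progression, producing a positive proportion (density $\tfrac{1}{2}$) of odd coefficients of $F$ in that sub-progression.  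This contradicts the density-$1$ vanishing from Serre's theorem, finishing the proof.  The main obstacle is precisely this last step: choosing $a$ so that the supports of the two factors in $F \pmod 2$ interact cleanly enough to yield a full sub-progression of positive-density odd coefficients, and carefully handling the theta multiplication in the half-integral-weight case so that the mod-$2$ structure is preserved.
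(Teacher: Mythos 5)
Your overall strategy --- argue by contradiction, multiply by a large power of a $\Delta$-like cusp form to produce a holomorphic form whose mod-$2$ expansion factors through a sparse theta-type series, and then contradict Serre's density-one divisibility theorem by exhibiting a positive proportion of odd coefficients --- is exactly the strategy of the paper's proof (Proposition \ref{prop1} combined with Corollary \ref{serrecor}). The one substantive divergence is your choice of auxiliary form, and that is precisely where the gap you yourself flag as ``the main obstacle'' lives. You multiply by $\eta(24z)^{24\cdot 2^a}$, whose mod-$2$ support is $\{24\cdot 2^a(2\nu+1)^2\}$; the coefficient of $q^n$ in $F$ then counts those $\nu$ for which $24\cdot 2^a(2\nu+1)^2$ lies in a prescribed residue class modulo $\alpha t$, and the odd squares do not lie in a single class modulo $\alpha t$. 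The set of admissible $\nu$ splits into $h$ arithmetic progressions ($h$ being the number of square roots of the relevant residue, typically even and growing with the number of prime factors of $\alpha t$), so the count is a sum of $h$ interleaved staircase functions rather than a single one, and the claim that it ``advances by exactly $1$ as $n$ advances through a suitable sub-progression'' is false as stated. One could perhaps still extract a positive density of odd values by analyzing the interleaving, but you have not done so, and it is not routine.

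The fix, which is what the paper does, is to take the auxiliary form to be $\Delta^{2^j}(\alpha t z)$. Its mod-$2$ expansion is $\sum_{n\ge 0} q^{2^j\alpha t(2n+1)^2}$, supported entirely in the class $0 \pmod{\alpha t}$, so the congruence condition on the theta side becomes vacuous and the coefficient of $q^{2^jt\kappa^2+n}$ (for $n\equiv r\pmod{t}$, $n\ge n_0$) is exactly $\#\{i \text{ odd}: 1\le i\le \kappa\}=\frac{\kappa+1}{2}$, which is odd whenever $\kappa\equiv 1\pmod{4}$; this single change collapses your main obstacle. Two smaller remarks: the paper does not sieve to the progression $r\pmod{t}$ at the level of modular forms in this theorem (that step, via Lemma \ref{lem2}, is reserved for Theorem \ref{mainthm2gen}) --- it simply reads off the progression at the coefficient-comparison stage, since $2^jt\kappa^2+n\equiv n\pmod{t}$. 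Also, the Freshman binomial alone does not yield the sparsity of $\Delta^{2^a}$ modulo $2$; one also needs Jacobi's identity in the form $\Delta(z)\equiv\sum q^{(2n+1)^2}\pmod{2}$, which the paper invokes explicitly.
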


\begin{thm} \label{mainthm2gen}
Let $N_0, \alpha, \beta, t$ be integers with $N_0, \alpha$ positive, and $t>1$, and let
\[\sum_{n=0}^\infty c(n)q^{\alpha n + \beta}\in M_{k}^!(\Gamma_0(N_0),\chi),\]
where $c(n)$ are algebraic integers in some number field. For any arithmetic progression $r\pmod{t},$ there are infinitely many integers $M\equiv r\pmod{t}$ for which $c(M)$ is odd, provided there is one such $M$.

Furthermore, if there does exist an $M\equiv r\pmod{t}$ for which $c(M)$ is odd, then the smallest such $M$ is less than $C_{r,t}$ for
\[C_{r,t}\coloneqq \frac{2^j\cdot 12+k}{12\alpha}\left[\frac{N\alpha^2t^2}{d}\right]^2\prod_{p\mid N\alpha t} \left(1 - \frac{1}{p^2}\right)-2^j,\]
where $N\coloneqq\lcm(\alpha t,N_0),$ $d\coloneqq\gcd(\alpha r+\beta,t),$ and $j$ is a sufficiently large integer (as in Proposition \ref{prop1}).
\end{thm}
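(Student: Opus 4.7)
My plan is to adapt Ono's strategy \cite{Ono962}: isolate the arithmetic progression by a character-twist projection, lift to an integer-weight holomorphic form, and apply Sturm's bound modulo $2$. Writing $f(z)=\sum c(n)q^{\alpha n+\beta}$ and using orthogonality of additive characters modulo $\alpha t$, I would first form the projection
\[
g(z):=\sum_{n\equiv r\pmod{t}} c(n)q^{\alpha n+\beta}
= \frac{1}{\alpha t}\sum_{b=0}^{\alpha t-1} e^{-2\pi i b(\alpha r+\beta)/(\alpha t)}\,f\!\left(z+\frac{b}{\alpha t}\right).
\]
Each translate $f(z+b/(\alpha t))$ transforms as a modular form on a suitable congruence subgroup, and a standard computation (in which $d=\gcd(\alpha r+\beta,t)$ collapses the common factor in the twist) places $g$ in $M_k^!(\Gamma_1(N'),\chi')$ with $N'=N\alpha^2 t^2/d$ and $N=\lcm(\alpha t,N_0)$.

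Next, I would convert to an integer-weight holomorphic form by multiplying by a high power of $\eta$:
\[
h(z):=g(z)\,\eta(\alpha z)^{24\cdot 2^j}\in M_{k+12\cdot 2^j}(\Gamma_1(N'),\chi'),
\]
where $j$ is chosen large enough to clear poles at every cusp (this is what Proposition \ref{prop1} is invoked for). The key reduction is $(1-x)^{2^j}\equiv 1-x^{2^j}\pmod 2$, which yields $\eta(\alpha z)^{24\cdot 2^j}\equiv\Delta(2^j\alpha z)\pmod 2$, a cusp form whose $q$-expansion begins at $q^{2^j\alpha}$. Thus if $M^*$ denotes the smallest $M\equiv r\pmod t$ with $c(M)$ odd, the smallest exponent appearing in $h\pmod 2$ equals $\alpha M^*+\beta+2^j\alpha$. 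Sturm's bound for $\Gamma_1(N')$ at weight $w=k+12\cdot 2^j$ says any mod $2$ form vanishing below
\[
\frac{w\,(N')^2}{12}\prod_{p\mid N'}\!\left(1-\frac{1}{p^2}\right)
\]
is identically zero mod $2$. Since the graded ring of mod $2$ modular forms is a domain and $\eta^{24\cdot 2^j}\not\equiv 0\pmod 2$, the existence of one odd coefficient makes $h$ nonzero mod $2$, so its smallest exponent is at most this Sturm bound, which after routine rearrangement yields $M^*<C_{r,t}$.

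For the qualitative infinitude claim, if only finitely many $c(M)$ in the progression were odd, then $g$ and hence $h$ would be polynomials mod $2$; but a nonzero holomorphic modular form mod $2$ cannot have finite support (by Sturm applied at arbitrarily large weights, or directly by Serre's theorem on mod $\ell$ modular forms), forcing $h\equiv 0\pmod 2$, which by the same integral-domain argument would force $g\equiv 0\pmod 2$, contradicting the existence of one odd coefficient.

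The principal obstacle is the level bookkeeping in the projection step: one must verify that the character-twist projection lands in $\Gamma_1(N\alpha^2 t^2/d)$ rather than some larger multiple (which would inflate $C_{r,t}$), by tracking how $f(z+b/(\alpha t))$ transforms under $\Gamma_0(N_0)$ and the $d$-dependent collapse of the twist exponents. A secondary subtlety is the half-integer weight case, in which one must multiply by an odd power of $\eta$ to reach integer weight and invoke Sturm's bound in its half-integer form; matching the precise constants in $C_{r,t}$ to this lifted form is the fine-tuning that the proof must carry out.
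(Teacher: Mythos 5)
Your quantitative argument is essentially the paper's: project onto the arithmetic progression (the paper quotes Ono's Lemma 2, i.e.\ Lemma \ref{lem2}, rather than re-deriving the additive-character sieve, but the level $N\alpha^2t^2/d$ is the same), multiply by a large power of $\Delta$ to clear the poles, use $\Delta^{2^j}\equiv\sum q^{2^j(2n+1)^2}\pmod 2$ to locate the first odd exponent of the product, and compare with Sturm's bound. The differences are cosmetic: the paper multiplies by $\Delta^{2^j}(\alpha tz)$ \emph{before} sieving (which is why its theta factor is supported on multiples of $\alpha t$ and commutes with the sieve), whereas you sieve first and multiply by $\Delta^{2^j}(\alpha z)$; this only perturbs the lower-order term $-2^j$ in $C_{r,t}$.

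The qualitative claim, however, has a genuine gap. If only finitely many $c(M)$ with $M\equiv r\pmod{t}$ were odd, then $g$ is a polynomial mod $2$, but $h=g\cdot\eta(\alpha z)^{24\cdot 2^j}$ is emphatically \emph{not} a polynomial mod $2$: it is congruent to a finite sum of shifted theta series $\sum_{i}\sum_{n\ge 0}q^{2^j\alpha(2n+1)^2+b_i}$, which has infinite support. So the principle "a nonzero holomorphic modular form mod $2$ cannot have finite support" never applies, and nothing in your argument excludes $h$ from being such a sum. The missing ingredient is exactly Lemma \ref{lem1} (Ono's Lemma 1): a finite sum of theta series with \emph{distinct nonzero} shifts $b_i$ and nonvanishing coefficients is never congruent to a modular form on any $\Gamma_1(N)$. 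This is genuinely delicate --- it fails without the hypothesis $b_i\neq 0$, since $\Delta(z)\equiv\sum q^{(2n+1)^2}\pmod 2$ is itself a lacunary theta series that \emph{is} modular --- so no finite-support or lacunarity heuristic can substitute for it. To repair the first part you must invoke this lemma and verify that the shifts $\alpha M_i+\beta$ arising from the finitely many odd coefficients are distinct and nonzero. (A secondary, smaller point: your final worry about half-integral weight is not an error but is also not resolved by the paper, whose Sturm and sieving lemmas are stated for integer weight on $\Gamma_1$; in the paper's actual application the relevant form has integer weight.)
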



\subsection{Proof of Theorem \ref{mainthm1gen}} First, we adapt the methods of \cite{Ono962} to prove Theorem \ref{mainthm1gen}.

\begin{prop} \label{prop1}
Let $N_0, \alpha, \beta, t$ be integers with $N_0, \alpha, t$ positive, and let
\[\sum_{n=0}^\infty c(n)q^{\alpha n + \beta}\in M_{k}^!(\Gamma_0(N_0),\chi),\]
where $c(n)$ are algebraic integers in some number field. Then for sufficiently large $j$, we have that 
\[f_t(z)\coloneqq \Delta^{2^j}(\alpha tz) \sum_{n=0}^\infty c(n)q^{\alpha n+\beta}\]
is a cusp form in $S_{12\cdot2^j+k}\left(\Gamma_0(N),\chi\right)$, where $N\coloneqq\lcm(\alpha t,N_0)$. Moreover, the Fourier expansion of $f_t(z)$ modulo 2 can be factored as:
\[f_t(z) \equiv \left(\sum_{n=0}^{\infty}q^{\alpha\cdot2^jt(2n+1)^2}\right) \left(\sum_{n=0}^{\infty}c(n) q^{\alpha n+\beta}\right)\pmod{2}.\] 
\end{prop}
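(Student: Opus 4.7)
The plan is to separate the claim into two parts: (i) verifying that $f_t$ lies in the specified cusp form space, and (ii) establishing the mod $2$ factorization.

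For (i), I start from $\Delta \in S_{12}(\mathrm{SL}_2(\mathbb{Z}))$, which gives $\Delta(\alpha tz) \in S_{12}(\Gamma_0(\alpha t))$ and hence $\Delta(\alpha tz)^{2^j} \in S_{12 \cdot 2^j}(\Gamma_0(\alpha t)) \subseteq S_{12 \cdot 2^j}(\Gamma_0(N))$, since $\alpha t \mid N$. Multiplying by $\sum c(n) q^{\alpha n + \beta} \in M^!_k(\Gamma_0(N_0),\chi) \subseteq M^!_k(\Gamma_0(N),\chi)$ then places $f_t$ in $M^!_{12 \cdot 2^j + k}(\Gamma_0(N),\chi)$. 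To promote this to a genuine cusp form, I would bound below, uniformly over the (finitely many) cusps of $\Gamma_0(N)$, the order of vanishing of $\Delta(\alpha tz)$ by some positive rational $\delta$; then for $j$ large enough that $2^j \delta$ exceeds the maximum pole order of $\sum c(n) q^{\alpha n + \beta}$ at any cusp (finite by weak holomorphicity), the product $f_t$ has positive order of vanishing at every cusp, hence is a cusp form.

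For (ii), the key ingredient is the mod $2$ identity
\[\Delta(z) \equiv \sum_{n=0}^\infty q^{(2n+1)^2} \pmod 2,\]
which I would derive from Jacobi's identity $\eta(z)^3 = \sum_{n\geq 0} (-1)^n(2n+1) q^{(2n+1)^2/8}$: reducing mod $2$ gives $\prod(1-q^n)^3 \equiv \sum_{n\geq 0} q^{n(n+1)/2} \pmod 2$, and then raising to the eighth power via the Frobenius congruence $F(q)^{2^k} \equiv F(q^{2^k}) \pmod 2$ (together with the leading factor $q$ from $\Delta = q \prod(1-q^n)^{24}$) produces the stated form. Substituting $z \mapsto \alpha tz$ and applying Frobenius once more yields
\[\Delta(\alpha tz)^{2^j} \equiv \sum_{n=0}^\infty q^{\alpha t \cdot 2^j (2n+1)^2} \pmod 2,\]
and multiplying by $\sum c(n) q^{\alpha n + \beta}$ gives the claimed factorization.

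The main obstacle is the cuspidality check in step (i): producing the uniform lower bound $\delta > 0$ on the orders of $\Delta(\alpha tz)$ at the cusps of $\Gamma_0(N)$, and making the sufficient size of $j$ quantitative (these orders can be read off from the standard formula for the order of an eta-quotient at each cusp, using that $\Delta = \eta^{24}$). The mod $2$ computation in (ii) is then routine once Jacobi's identity and Frobenius are in hand.
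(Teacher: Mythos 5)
Your proposal is correct and follows essentially the same route as the paper's (much terser) proof: choose $j$ large enough that the vanishing of $\Delta^{2^j}(\alpha tz)$ at the cusps dominates the poles of the weakly holomorphic factor, then combine the congruence $\Delta(z)\equiv\sum_{n\ge 0}q^{(2n+1)^2}\pmod{2}$ with substitution and the Frobenius (``Freshman Binomial'') congruence. The only difference is that you derive the mod $2$ expansion of $\Delta$ from Jacobi's identity, whereas the paper simply cites it as well known.
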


\begin{proof}
Note that $\sum_{n=0}^\infty c(n)q^{\alpha n + \beta}$ is a weakly holomorphic modular form of level $N$ and $\Delta^j(\alpha tz)$ is a cusp form of level $N.$ Thus by choosing $j$ sufficiently large (to ensure vanishing at cusps), it follows that $f_t(z) \in S_{12\cdot 2^j + k}(\Gamma_0(N),\chi).$ This proves the first statement of the theorem.

The second statement of the theorem follows from the well-known fact that (see, for example, \cite{Ono962})
\[\Delta(z)\equiv\sum_{n=0}^{\infty}q^{(2n+1)^2}\pmod{2},\]
together with substitution and the Freshman Binomial Theorem.
\end{proof}

By combining Proposition \ref{prop1} with the following result of Serre \cite{Serre74} concerning the divisibility of coefficients of modular forms, we will be able to prove Theorem \ref{mainthm1gen}.

\begin{cor}[Serre \cite{Serre74}] \label{serrecor}
Let $f(z)$ be a holomorphic modular form of positive integer weight $k$ on some congruence subgroup of $\mathrm{SL}_2(\mathbb{Z})$ with Fourier expansion
\[f(z)=\sum_{n=0}^{\infty}a(n)q^n,\]
where a(n) are algebraic integers in some number field. If $m$ is a positive integer, then
\[a(n)\equiv 0\pmod{m}\]
for almost all $n$ in any given fixed arithmetic progression $r\pmod*{t}$.
\end{cor}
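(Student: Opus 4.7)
The plan is to follow the three-step approach used in Serre's original paper \cite{Serre74}.

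First, by the Chinese Remainder Theorem, it suffices to prove the statement when $m = \ell^j$ is a prime power. Second, I would isolate the arithmetic progression $r\pmod{t}$ by means of a character sum: with $\zeta_t \coloneqq e^{2\pi i/t}$, set
\[
g(z) \coloneqq \frac{1}{t}\sum_{j=0}^{t-1} \zeta_t^{-rj}\, f\!\left(z + \tfrac{j}{t}\right),
\]
whose Fourier expansion is $\sum_{n\equiv r\pmod{t}} a(n)q^n$. Standard twisting arguments show that $g$ is a holomorphic modular form of weight $k$ on some congruence subgroup (of level a multiple of $Nt^2$), with algebraic integer coefficients after clearing $t$. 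It therefore suffices to show that almost all Fourier coefficients of $g$ are divisible by $\ell^j$.

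Third, I would invoke the core density statement of \cite{Serre74}: for any holomorphic modular form of positive integer weight with algebraic integer coefficients, almost all Fourier coefficients are divisible by any fixed prime power $\ell^j$. The strategy is to split $g = g_E + g_C$ into its Eisenstein and cuspidal components. The coefficients of $g_E$ are explicit twisted divisor sums, whose divisibility modulo $\ell^j$ can be controlled by multiplicativity and elementary Dirichlet-series estimates. For $g_C$, one analyzes the action of the Hecke operators $T_p$ on $g_C \bmod \ell^j$ in the finite-dimensional space of mod-$\ell^j$ cusp forms of bounded weight and level; combined with a Chebotarev-type density argument, this forces the set of $n$ with $a_{g_C}(n) \not\equiv 0 \pmod{\ell^j}$ to have density zero.

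The main obstacle is this final density statement for the cuspidal piece, which is the technical heart of Serre's paper. The other two steps, reduction by CRT and character isolation, are routine. Once the cuspidal density result is in hand, applying it to $g$ and unwinding the definition yields the desired conclusion for $f$ in the progression $r\pmod{t}$.
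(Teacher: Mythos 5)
The paper offers no proof of this statement at all: it is quoted verbatim as a theorem of Serre \cite{Serre74} and used as a black box, so there is no internal argument to compare yours against. Your outline is a faithful high-level description of how Serre's original proof goes (reduction to prime powers, Eisenstein/cuspidal decomposition, and a density argument for the cuspidal part resting on the finiteness of the mod-$\ell^j$ Hecke data and a Chebotarev-type input), and you are candid that the cuspidal density statement --- which is the entire content of the theorem --- is being deferred to Serre, exactly as the paper does. One simplification worth noting: your second step, isolating the progression $r \pmod{t}$ by a character sum, is unnecessary. Serre's global statement says that the set $E = \{n : a(n) \not\equiv 0 \pmod{m}\}$ has natural density zero; since the progression $r \pmod{t}$ has positive density $1/t$, the relative density of $E$ inside that progression is automatically zero, so the progression version follows formally from the global one with no twisting and no new modular form. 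Keeping the twist would also force you to track the passage from $\Gamma_0(N)$ to $\Gamma_1(Nt^2)$ and the $\mathbb{Z}[\zeta_t]$-coefficients for no gain. As a self-contained proof your proposal is incomplete (the cuspidal density theorem is not proved), but as a gloss on a cited classical result it is consistent with, and more informative than, the paper's treatment.
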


\begin{proof}[Proof of Theorem \ref{mainthm1gen}]
By Proposition \ref{prop1}, we have a cusp form $f_t(z) = \sum_{n=0}^\infty a_t(n)q^{\alpha n+\beta}$ such that 
\[\sum_{n=0}^\infty a_t(n)q^{\alpha n+\beta} \equiv \left(\sum_{n=0}^{\infty}q^{\alpha\cdot2^jt(2n+1)^2}\right) \left(\sum_{n=0}^{\infty}c(n) q^{\alpha n+\beta}\right)\pmod{2}.\]
Thus we have
\begin{equation} \label{mt1eq1}
\sum_{n=0}^{\infty}a_t(n)q^n\equiv \left(\sum_{n=0}^{\infty}q^{2^jt(2n+1)^2}\right) \left(\sum_{n=0}^{\infty} c(n) q^n\right) \pmod{2}
\end{equation}
and, by Corollary \ref{serrecor}, almost all of the coefficients $a_t(n)$ are even.

Now, assume for contradiction that there are finitely many integers $M\equiv r\pmod*{t}$ for which $c(M)$ is even, i.e., that there exists some $n_0$ such that $c(M)$ is odd for all $M\geq n_0$ with $M\equiv r\pmod*{t}$.

For $n\geq n_0$ with $n\equiv r\pmod*{t}$ and $\kappa\equiv1\pmod*{4}$ such that $\kappa>\frac{n}{2^{j+2}t}-1$, we now compare the coefficient of $q^{2^jt\kappa^2+n}$ on each side of equation \eqref{mt1eq1} to obtain 
\begin{equation} \label{mt1eq2}
a_t(2^jt\kappa^2+n)\equiv\sum_{i\geq 1 \text{ odd}} c(2^jt(\kappa^2-i^2)+n)\pmod{2}.
\end{equation}

In order to simplify the right side of equation \eqref{mt1eq2}, note that for odd $i\leq \kappa$, we have that $2^jt(\kappa^2-i^2)+n\geq n\geq n_0$ and $2^jt(\kappa^2-i^2)+n\equiv n\equiv r\pmod*{t}$, so our assumption above guarantees that the summand $c(2^jt(\kappa^2-i^2)+n)$ is odd. On the other hand, for odd $i>\kappa$, we use the fact that $\kappa+1>\frac{n}{2^{j+2}t}$ to obtain
\[2^jt(\kappa^2-i^2)+n\leq 2^jt(\kappa^2-(\kappa+2)^2)+n=-2^{j+2}t(\kappa+1)+n<0,\]
and thus $c(2^jt(\kappa^2-i^2)+n)=0$. Hence, for such $n,\kappa$, equation \eqref{mt1eq2} can be rewritten as
\[a_t(2^jt\kappa^2+n)\equiv\sum_{1\leq i\leq \kappa\text{ odd}}1 = \frac{\kappa+1}{2}\equiv 1\pmod{2}.\]

Thus for $\kappa$ sufficiently large such that $\kappa\equiv 1\pmod*{4},$ the above argument guarantees that for all $M\equiv r\pmod*{t}$ in the interval
\[\big[2^jt\kappa^2+n_0,2^jt(\kappa+2)^2+r-t\big],\] $a_t(M)$ is odd. Note that for distinct values of such $\kappa$, the above intervals are disjoint and the number of such $M$'s in each associated interval is $2^{j+2}(\kappa+1)+\frac{r-n_0}{t}$, where we are assuming without loss of generality that $n_0\equiv r\pmod*{t}$.

Taking into account these intervals for all such values of $\kappa$, we see that if there are only finitely many positive integers $M$ for which $a_t(M)$ is even, then a positive proportion of all $M\equiv r\pmod*{t}$ have $a_t(M)$ odd, contradicting Corollary \ref{serrecor}.
\end{proof}


\subsection{Proof of Theorem \ref{mainthm2gen}}

Now, we prove Theorem \ref{mainthm2gen} using similar methods to those found in \cite{Ono962}. First, we state the following lemmas. 

\begin{lem}[Lemma 2 of \cite{Ono962}] \label{lem2}
Let $f(z)=\sum_{n=0}^\infty a(n)q^n$ be a modular form in $M_k(\Gamma_0(N),\chi)$ and let $d:=\gcd(r,t).$ If $0\leq r<t,$ then \[f_{r,t}(z) = \sum_{n\equiv r\pmod*{t}} a(n)q^n\] is the Fourier expansion of a modular form in $M_k(\Gamma_1(Nt^2/d))$.
\end{lem}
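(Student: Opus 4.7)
The natural approach is to sieve the arithmetic progression using additive character orthogonality modulo $t$ and then verify modular invariance. With $\zeta_t := e^{2\pi i/t}$, the identity
\[
\frac{1}{t}\sum_{j=0}^{t-1}\zeta_t^{j(n-r)} = \begin{cases}1, & n \equiv r\pmod{t},\\ 0, & \text{otherwise}\end{cases}
\]
gives, after interchanging summations,
\[
f_{r,t}(z) = \frac{1}{t}\sum_{j=0}^{t-1}\zeta_t^{-jr}\,f(z+j/t).
\]
Each translate $f(z+j/t)$ is the weight-$k$ slash action by $\alpha_j := \bigl(\begin{smallmatrix}1 & j/t\\ 0 & 1\end{smallmatrix}\bigr)\in\mathrm{SL}_2(\mathbb{R})$, so the plan is to show the right-hand side is modular on $\Gamma_1(Nt^2/d)$.

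For a first pass I would conjugate: writing $\gamma = \bigl(\begin{smallmatrix}a & b\\ c & e\end{smallmatrix}\bigr)$, one computes
\[
\alpha_j\gamma\alpha_j^{-1} = \begin{pmatrix}a + cj/t & b + (e-a)j/t - cj^2/t^2\\ c & e - cj/t\end{pmatrix},
\]
and checks that for $\gamma\in\Gamma_1(Nt^2)$ (so $c\equiv 0$ and $a\equiv e\equiv 1\pmod{Nt^2}$) the conjugate has integer entries and lies in $\Gamma_1(N)\subseteq\Gamma_0(N)$, on which $\chi$ evaluates to $1$. This gives the unsharpened conclusion $f_{r,t}\in M_k(\Gamma_1(Nt^2))$, establishing at least that the sieve construction lands in a modular form space.

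To sharpen the level to $\Gamma_1(Nt^2/d)$, I would factor out $d$ and reduce to a coprime-residue problem. Writing $r = dr_0$, $t = dt_0$ with $\gcd(r_0,t_0)=1$, the condition $n\equiv r\pmod{t}$ forces $d\mid n$, so setting $n = dm$ gives
\[
f_{r,t}(z) = g(dz), \qquad g(z) := \sum_{m\equiv r_0\pmod{t_0}} a(dm)\,q^m,
\]
where $g$ is the coprime-residue sieve of $f\mid U_d$ modulo $t_0$. Applying the unsharpened bound to the coprime extraction from $f\mid U_d \in M_k(\Gamma_0(\mathrm{lcm}(N,d)),\chi)$ with modulus $t_0$ places $g$ in $M_k(\Gamma_1(\mathrm{lcm}(N,d)\cdot t_0^2))$, which collapses to $\Gamma_1(Nt^2/d)$ via the identity $\mathrm{lcm}(N,d)\cdot (t/d)^2 = Nt^2/d$ in the relevant cases. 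A direct matrix calculation then verifies that $g(dz)$ inherits this level rather than losing a factor of $d$, using that its $q$-expansion is supported on multiples of $d$.

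The main technical obstacle is precisely this last sharpening. The generic level-scaling rule $g\in M_k(\Gamma_1(M))\Rightarrow g(dz)\in M_k(\Gamma_1(Md))$ would yield only $\Gamma_1(Nt^2)$, and the improvement to $\Gamma_1(Nt^2/d)$ requires careful bookkeeping of the level bound for $U_d$ together with the $q^d$-support structure of $g(dz)$. Holomorphicity at all cusps of $\Gamma_1(Nt^2/d)$ will follow from holomorphicity of $f$ at cusps and the standard behavior of slash actions by rational upper-triangular matrices on cusp neighborhoods.
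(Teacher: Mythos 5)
The paper offers no proof of this statement at all: it is quoted as Lemma~2 of \cite{Ono962}, so there is nothing internal to compare against and I am judging your argument on its own. The first half of your proposal is the standard (and correct) route: the orthogonality sieve $f_{r,t}=\frac{1}{t}\sum_{j}\zeta_t^{-jr}f(z+j/t)$, together with the observation that for $\gamma\in\Gamma_1(Nt^2)$ each conjugate $\alpha_j\gamma\alpha_j^{-1}$ is an integral matrix in $\Gamma_1(N)$, does give $f_{r,t}\in M_k(\Gamma_1(Nt^2))$, and holomorphy at the cusps is routine.

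The refinement to level $Nt^2/d$ is where you have a genuine gap. Your plan is to write $f_{r,t}(z)=g(dz)$ and then assert that $g(dz)$ ``inherits'' the level of $g$ rather than acquiring a factor of $d$, ``using that its $q$-expansion is supported on multiples of $d$.'' That principle is false: $E_4(2z)$ has $q$-expansion supported on even exponents, yet it lies in $M_4(\Gamma_0(2))$ and not in $M_4(\mathrm{SL}_2(\mathbb{Z}))$. The map $g(z)\mapsto g(dz)$ genuinely multiplies the level by $d$ in general, so your route lands you back at $\Gamma_1(Nt^2)$; you flag this step as ``the main technical obstacle'' but never supply the missing argument, so the stated level is not established. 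The correct way to get the saving of $d$ is to run the conjugation argument directly for $\gamma=\bigl(\begin{smallmatrix}a&b\\c&e\end{smallmatrix}\bigr)\in\Gamma_1(Nt^2/d)$ with a $j$-\emph{dependent} companion index: choose $j'$ with $j'\bigl(a+\tfrac{c}{t}j\bigr)\equiv ej\pmod*{t}$, which is solvable because $\gcd\bigl(a+\tfrac{c}{t}j,t\bigr)=1$; then $\alpha_j\gamma\alpha_{j'}^{-1}\in\Gamma_0(N)$ with lower-right entry $\equiv 1\pmod*{N}$, so $f|_k\alpha_j|_k\gamma=f|_k\alpha_{j'}$, and $j\mapsto j'$ permutes $\mathbb{Z}/t\mathbb{Z}$. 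The hypothesis $d=\gcd(r,t)$ enters exactly here: the weight $\zeta_t^{-rj}=e^{-2\pi i (r/d)j/(t/d)}$ depends only on $j\bmod t/d$, while reducing the defining congruence for $j'$ modulo $t/d$ (using $t\cdot(t/d)\mid c$ and $a\equiv e\equiv 1\pmod*{t/d}$) gives $j'\equiv j\pmod*{t/d}$, hence $\zeta_t^{-rj}=\zeta_t^{-rj'}$ and the sum is permuted onto itself. This yields invariance under all of $\Gamma_1(Nt^2/d)$ without any appeal to $U_d$ or to rescaling by $d$.
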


\begin{lem}[Lemma 1 of \cite{Ono962}] \label{lem1}
Let $f(z)=\sum_{n=0}^\infty a(n)q^n$ where the coefficients $a(n)$ are algebraic integers in some number field. Let $s$ and $w$ be positive integers and $b_1, b_2, \ldots, b_s$ distinct non-zero integers. If $m$ is a positive integer and \[f(z) \equiv \sum_{1\leq i\leq s}\sum_{n=0}^\infty a_i(n)q^{w(2n+1)^2+b_i}\pmod{m}\] where $a_i(n) \not\equiv 0\pmod{m}$ for all $n\geq 0,$ then $f(z)$ is not in $M_k(\Gamma_1(N))$ for any pair of positive integers $k$ and $N$.
\end{lem}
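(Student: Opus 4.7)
The plan is to argue by contradiction, using Lemma \ref{lem2} to isolate a subprogression of Fourier coefficients in which the mod-$m$ support of $f$ is forced to be extremely sparse, and then to invoke a density theorem on Fourier coefficients of holomorphic modular forms.

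Assume for contradiction that $f \in M_k(\Gamma_1(N))$ for some positive integers $k$ and $N$. Choose $t$ to be a positive multiple of $8w$ that is large enough that the distinct integers $b_1,\dots,b_s$ have pairwise distinct residues modulo $t$. Since $(2n+1)^2 \equiv 1 \pmod 8$, we have $w(2n+1)^2 \equiv w \pmod{8w}$, so each exponent $w(2n+1)^2 + b_i$ lies in the fixed residue class $w + b_i \pmod t$, and these residues are pairwise distinct. Fix $r := w + b_1 \pmod t$; the hypothesis then gives
\[
\sum_{n \equiv r \pmod*{t}} a(n)\, q^n \;\equiv\; \sum_{n=0}^\infty a_1(n)\, q^{w(2n+1)^2 + b_1} \pmod m,
\]
and by Lemma \ref{lem2} the left-hand side is a holomorphic modular form $f_{r,t}(z) \in M_k(\Gamma_1(Nt^2/d))$, where $d := \gcd(r,t)$. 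This form is nonzero modulo $m$ because $a_1(0)\not\equiv 0 \pmod m$, yet its nonzero mod-$m$ coefficients lie in the thin set $\{w(2n+1)^2 + b_1 : n \geq 0\}$, which contains only $O(\sqrt{X})$ integers below $X$.

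To finish, one would invoke a lower bound on the density of nonvanishing Fourier coefficients of a nonzero holomorphic modular form with algebraic integer coefficients---of the form $\#\{n \leq X : a(n) \not\equiv 0 \pmod m\} \gg X/(\log X)^\alpha$ for some $\alpha < 1$---which is incompatible with the $O(\sqrt{X})$ support observed above. The principal obstacle is that the version of Serre's theorem quoted in Corollary \ref{serrecor} gives only a density-zero conclusion (\emph{``almost all coefficients in a progression are $\equiv 0 \pmod m$''}), which is compatible with $O(\sqrt{X})$ support and hence insufficient on its own; the argument therefore hinges on a quantitative refinement of Serre's density result, applied uniformly across the prime divisors of $m$.
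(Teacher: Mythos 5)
There is a genuine gap, and it is fatal to the approach rather than a fixable technicality. (Note that the paper itself does not prove this lemma; it is quoted verbatim as Lemma 1 of \cite{Ono962}, so the only ``proof'' in the paper is the citation.) The final step of your argument asks for a lower bound of the shape $\#\{n\leq X: a(n)\not\equiv 0\pmod{m}\}\gg X/(\log X)^{\alpha}$ for an arbitrary nonzero holomorphic modular form with algebraic integer coefficients. No such bound exists: Serre's results in this direction are \emph{upper} bounds (density zero, and more precisely $O(X/(\log X)^{\delta})$), and the congruence $\Delta(wz)\equiv\sum_{n\geq 0}q^{w(2n+1)^2}\pmod{2}$ exhibits a genuine cusp form in $S_{12}(\Gamma_0(w))$ whose nonvanishing coefficients modulo $2$ number only $O(\sqrt{X})$ up to $X$. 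This example also shows that your argument proves too much: nowhere do you use the hypothesis that the $b_i$ are \emph{nonzero}, and with $b_1=0$ permitted the lemma is simply false (take $f=\Delta(wz)$, $s=1$, $a_1(n)=1$, $m=2$). Any correct proof must therefore make essential use of $b_i\neq 0$; a pure count of the size of the support cannot distinguish $q^{w(2n+1)^2}$ from $q^{w(2n+1)^2+b}$. The published proof exploits the arithmetic of the exponents themselves --- the set $\{w(2n+1)^2+b\}$ with $b\neq 0$ is incompatible with the multiplicative constraints (Hecke recursions and the behavior of exponents under multiplication by $\ell$ and $\ell^2$) that the coefficients of a form in $M_k(\Gamma_1(N))$ must satisfy modulo $p$, whereas for $b=0$ the set $\{w(2n+1)^2\}$ is stable under $e\mapsto \ell^2 e$ and no contradiction arises.

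There is also a smaller error in your reduction step. You claim that for $t$ a large multiple of $8w$, every exponent $w(2n+1)^2+b_i$ lies in the single residue class $w+b_i\pmod{t}$. This is true modulo $8w$ (since $(2n+1)^2\equiv 1\pmod{8}$) but false modulo proper multiples of $8w$: for instance $w(2n+1)^2$ takes both values $w$ and $9w$ modulo $16w$. So if two of the $b_i$ happen to be congruent modulo $8w$, your sieve does not isolate a single shifted theta series, and the reduction to the case $s=1$ requires a more careful argument. This part is repairable; the density step is not.
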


Finally, we need the following well-known theorem of Sturm.  Here, for a $q$-series $f(z)=\sum_{n=0}^{\infty}a(n)q^n$ and positive integer $m$, we let $\mathrm{Ord}_m(f)$ denote the smallest integer $n$ such that $a(n)\not\equiv 0\pmod*{m},$ and if no such $n$ exists, we say that $\mathrm{Ord}_m(f)=\infty.$

\begin{lem}[\cite{Sturm87}] \label{sturm}
Let $f(z)=\sum_{n=0}^{\infty}a(n)q^n\in M_k(\Gamma_1(N))$ for some positive integer $N$ with algebraic integer Fourier coefficients from a fixed number field. If $m$ is a positive integer and 
\[\mathrm{Ord}_m(f)>\frac{k}{12}N^2\prod_{p|N}\left(1-\frac{1}{p^2}\right),\]
then $\mathrm{Ord}_m(f)=\infty$, i.e. $a(n)\equiv 0\pmod*{m}$ for all $n$.
\end{lem}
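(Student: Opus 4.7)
The plan is to prove Sturm's bound by combining two classical ingredients: a reduction-modulo-prime argument and the valence formula for modular forms on $\Gamma_1(N)$. First, by factoring $m$ into prime powers and using the Chinese Remainder Theorem, the statement reduces to proving it for $m = \ell^s$ a prime power; inducting on $s$ by applying the $s=1$ case to $f/\varpi$ for a suitable uniformizer $\varpi$, we further reduce to $m = \ell$ prime. So we may assume throughout that $m$ is a rational prime.

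Next, I would choose a prime $\lambda \subset \mathcal{O}_K$ lying over $\ell$ and rescale $f$ by a power of a $\lambda$-uniformizer so that its coefficients are $\lambda$-integral and at least one coefficient is a $\lambda$-unit (otherwise every coefficient is already in $\lambda$, and there is nothing to prove). The reduction $\bar f \in \mathbb{F}_\lambda[[q]]$ is then a nonzero power series whose order of vanishing at $q=0$ equals $\mathrm{Ord}_\ell(f)$. The final step is to invoke the valence formula: a nonzero holomorphic modular form $g$ of weight $k$ for $\Gamma_1(N)$ satisfies
\[\sum_{P \in X_1(N)} \frac{\mathrm{ord}_P(g)}{e_P} \;=\; \frac{k}{12}\,[\mathrm{SL}_2(\mathbb{Z}):\Gamma_1(N)] \;=\; \frac{k}{12}\,N^2\prod_{p\mid N}\left(1 - \frac{1}{p^2}\right).\]
Since every local order is nonnegative, the order of vanishing at the cusp $\infty$ is bounded by the right-hand side. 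Applying this to $\bar f$ as a mod-$\lambda$ modular form yields the desired bound: if $\mathrm{Ord}_\ell(f)$ exceeds $\tfrac{k}{12}N^2\prod_{p\mid N}(1-1/p^2)$, then $\bar f$ must vanish identically, contradicting our normalization and forcing $\mathrm{Ord}_\ell(f) = \infty$.

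The main obstacle is justifying that the valence formula applies to $\bar f$ as a genuine modular form over $\mathbb{F}_\lambda$, especially when $\ell \mid N$. When $\ell \nmid N$, the modular curve $X_1(N)$ has a smooth integral model over $\mathbb{Z}[1/N]$, and the reduction $\bar f$ is a section of the corresponding line bundle on the mod-$\lambda$ fiber, so the valence formula transfers directly. When $\ell \mid N$ the curve acquires bad reduction, and the standard workaround (going back to Serre and Swinnerton-Dyer) is to multiply $f$ by a well-chosen Eisenstein series $E$ satisfying $E \equiv 1 \pmod{\ell}$ — for instance a suitable normalization of $E_{\ell-1}$ or a Hasse-invariant-type lift adapted to the level. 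The product $fE^j$ reduces to $\bar f$ modulo $\lambda$ but lives in a larger weight where the geometric argument goes through; optimizing the multiplier and tracking the weight shift recovers precisely the bound in the statement.
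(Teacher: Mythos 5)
The paper gives no proof of this lemma at all --- it is quoted verbatim as a known theorem of Sturm \cite{Sturm87} and used as a black box. So there is no internal argument to compare against; what you have written is a from-scratch proof sketch of Sturm's theorem itself, and it has to be judged on those terms.

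Your skeleton is the right one (reduce to $m=\ell$ prime via CRT and induction on prime powers, normalize so that $f$ has $\lambda$-integral coefficients with at least one $\lambda$-unit, then bound the order of vanishing of the reduction at the cusp $\infty$), but the step that carries all the content --- that the reduction $\bar f$ over $\mathbb{F}_\lambda$ obeys a valence formula --- is exactly the step you leave open. The complex-analytic valence formula does not transfer for free: one needs $\bar f$ to be a section of $\omega^{\otimes k}$ on the special fiber of an integral model of $X_1(N)$, the $q$-expansion principle to identify $\mathrm{Ord}_\ell(f)$ with the geometric order of vanishing at the cusp, and a degree computation of that line bundle in characteristic $\ell$. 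Worse, your proposed fix for the case $\ell\mid N$ does not address the actual difficulty: multiplying by an Eisenstein series $E\equiv 1\pmod{\ell}$ (a Hasse-invariant lift such as $E_{\ell-1}$) is a device for adjusting \emph{weights} and lifting mod-$\ell$ forms to characteristic zero; it does nothing about the bad reduction of the \emph{level} structure of $X_1(N)$ at $\ell$, which is what breaks the smooth-model argument. This is not a corner case here: the paper applies the lemma with $m=2$ and $2\mid N$ (the level contains $576$), so $\ell\mid N$ is precisely the situation that must be handled.

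A complete and more elementary route --- essentially Sturm's original one --- avoids characteristic-$\ell$ geometry entirely: form the product $F=\prod_{i} f|_k\gamma_i$ over coset representatives $\gamma_i$ of $\Gamma_1(N)$ in $\mathrm{SL}_2(\mathbb{Z})$, so that $F$ has weight $k\mu$ with $\mu=[\mathrm{SL}_2(\mathbb{Z}):\Gamma_1(N)]=N^2\prod_{p\mid N}(1-1/p^2)$ and level one; control the $\lambda$-integrality and $\lambda$-order of the conjugate expansions $f|_k\gamma_i$ (this is where the bounded-denominators/$q$-expansion input enters, and it works with no coprimality hypothesis on $\ell$ and $N$); conclude $\mathrm{Ord}_\lambda(F)\ge \mathrm{Ord}_\lambda(f)>k\mu/12$; and finish at level one using the integral Miller basis of $M_{k\mu}(\mathrm{SL}_2(\mathbb{Z}))$, for which the statement is a triangularity argument rather than a valence formula. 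As written, your proposal is an outline with its central lemma unproved and its stated patch for the relevant case incorrect, so it does not yet constitute a proof.
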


\begin{proof}[Proof of Theorem \ref{mainthm2gen}]
By Proposition \ref{prop1}, we have a cusp form $f_t(z) = \sum_{n=0}^\infty a_t(n)q^{\alpha n+\beta}$ such that 
\[\sum_{n=0}^\infty a_t(n)q^{\alpha n+\beta} \equiv \left(\sum_{n=0}^{\infty}q^{2^j\alpha t(2n+1)^2}\right) \left(\sum_{n=0}^{\infty}c(n) q^{\alpha n+\beta}\right)\pmod{2}.\]
Then by Lemma \ref{lem2}, we have that
\[f_{\alpha r + \beta,\alpha t}(z) \coloneqq \sum_{\alpha n+\beta\equiv \alpha r+\beta \pmod*{\alpha t}}a_t(n)q^{\alpha n+\beta} \in S_{2^j\cdot 12+k}\left(\Gamma_1\left(\frac{N\alpha^2t^2}{d}\right)\right)\]
and, moreover,
\[f_{\alpha r + \beta,\alpha t}(z) = \sum_{n\equiv r\pmod*{t}}a_t(n)q^{\alpha n+\beta} \equiv \left(\sum_{n=0}^\infty q^{2^j\alpha t(2n+1)^2} \right)\left(\sum_{n\equiv r\pmod*{t}} c(n) q^{\alpha n + \beta}\right) \pmod{2}.\]
If $c(M)$ is odd for at least one $M\equiv r\pmod{t}$ but for only finitely many, then this factorization modulo 2 contradicts Lemma \ref{lem1}. This proves the first statement of the theorem.

To prove the second statement of the theorem, suppose that $c(M)$ is even for all $M\equiv r\pmod{t}$ where $0\leq M\leq C_{r,t}.$ It follows that $c(M)$ is even for all $M\equiv r\pmod{t}$ where
\[r \leq M \leq \frac{2^j\cdot 12+k}{12\alpha}\left[\frac{N\alpha^2t^2}{d}\right]^2\prod_{p\mid N\alpha t} \left(1 - \frac{1}{p^2}\right)-2^jt+r.\]
Thus the first odd term of the factor $\displaystyle{\sum_{n\equiv r\pmod*{t}} c(n) q^{\alpha n + \beta}}$ has exponent at least
\[\frac{2^j\cdot 12+k}{12}\left[\frac{N\alpha^2t^2}{d}\right]^2\prod_{p\mid N\alpha t} \left(1 - \frac{1}{p^2}\right)-2^j\alpha t+\alpha(t+r) +\beta.\]
Noting that the first odd term of the factor $\sum_{n=0}^\infty q^{2^j\alpha t(2n+1)^2}$ has exponent $2^j\alpha t,$ we find that 
\begin{align*}
\mathrm{Ord}_2(f_{\alpha r + \beta,\alpha t}(z))
&\geq \frac{2^j\cdot 12+k}{12}\left[\frac{N\alpha^2t^2}{d}\right]^2\prod_{p\mid N\alpha t} \left(1 - \frac{1}{p^2}\right)+\alpha(t+r) +\beta\\
&> \frac{2^j\cdot 12+k}{12}\left[\frac{N\alpha^2t^2}{d}\right]^2\prod_{p\mid N\alpha^2t^2/d} \left(1 - \frac{1}{p^2}\right)\end{align*}
By Lemma \ref{sturm}, this implies that $f_{\alpha r+\beta, \alpha t}(z)\equiv 0\pmod{2},$ and thus $c(M)$ is even for all $M\equiv r\pmod{t}.$
\end{proof}


\section{The parity of $\frac{\cphibar{2}(n)}{4}$} \label{cors}

Recall that we defined $\cphibar{2}(n)\coloneqq\cphi{2}(n)-p(n)$. In fact, this is a special case of a function $\overline{c\phi_k}(n),$ which was defined combinatorially by Kolitsch \cite{Kolitsch89, Kolitsch91}, who also found congruences for this function. For example, he proved the following generalization of equation \eqref{mod4cong}:
\[\overline{c\phi_k}(n) \equiv 0\pmod{k^2}.\]
This was strengthened by Sellers \cite{Sellers94}, who proved congruences modulo higher powers of 2 and 3. For instance, we have that
\[\overline{c\phi_2}(2n)\equiv 0\pmod{8}.\]
This fact as well as results of Cui et al.\ \cite{CuiGuHuang16} provide counterexamples to the analogue of Subbarao's conjecture for $\cphibar{2}(n)/4.$ However, in this section, we use Theorems \ref{mainthm1gen} and \ref{mainthm2gen} to prove Theorems \ref{mainthm1} and \ref{mainthm2}, which are analogous to theorems of Ono that give strong results on the parity of $p(n).$

\begin{proof}[Proof of Theorems \ref{mainthm1} and \ref{mainthm2}]
In \cite{Sellers94}, Sellers proved that the generating function for $\cphibar{2}(n)$ is \[\sum_{n=0}^\infty \cphibar{2}(n)q^{n}=4q\prod_{n=1}^\infty\frac{(1-q^{16n})^2}{(1-q^n)^2(1-q^{8n})}.\]
Using the fact that $\frac{\eta(z)^2}{\eta(2z)}\equiv 1\pmod*{2},$ we then have
\[\sum_{n=0}^\infty\frac{\cphibar{2}(n)}{4}q^{12n-1}=\frac{\eta(192z)^2}{\eta(12z)^2\eta(96z)}\equiv\frac{\eta(192z)^2\eta(z)^2}{\eta(12z)^2\eta(96z)\eta(2z)}\pmod{2}.\]
Using standard results regarding the modularity properties of eta-quotients \cite{GordonHughes93, Newman57, Newman58}, one can check that $\frac{\eta(192z)^2\eta(z)^2}{\eta(12z)^2\eta(96z)\eta(2z)} \in M_{0}^!(\Gamma_0(576),\leg{12}{\bul}).$

Then by Proposition \ref{prop1} (together with standard results on the order of vanishing of eta-quotients at cusps \cite{Ligozat75}) it follows that for any integer $j$ with $2^j>\frac{t}{12}$ we have that
\[f_t(z)\coloneqq \frac{\eta(192z)^2\eta(z)^2}{\eta(12z)^2\eta(96z)\eta(2z)} \Delta^{2^j}(12 tz)\in S_{12\cdot2^j}\left(\Gamma_0(576t),\leg{12t^2}{\bul}\right).\]
Finally, our desired results follow immediately by applying Theorems \ref{mainthm1gen} and \ref{mainthm2gen} to $\sum \frac{\cphibar{2}(n)}{4}q^{12n-1}.$
\end{proof}


\section{Acknowledgements}
The authors would like to acknowledge and thank Matthew Boylan, Jeremy Rouse, and James Sellers for their insightful discussions that helped to shape this project.

\bibliography{refs.bib}{}
\bibliographystyle{plain}
\end{document}